\documentclass[12pt]{amsart}

\usepackage{amsmath, amsthm, amssymb}

\usepackage{url} % <-- Para p\'aginas web o similar: \url{...}
\usepackage{graphicx}
\usepackage{xcolor}
\usepackage{moreverb}

\usepackage{fancyvrb}

\def\r{\mathbb R}
\def\m{\mathbb M}
\def\s{\mathbb S}
\def\h{\mathbb H}

\def\v{\mathcal V}

\def\w{\mathcal W}
\def\x{\mathfrak X}
\DeclareMathOperator{\sech}{sech}

\usepackage{listings}
 \lstnewenvironment{code}[1][]
  {\lstset{#1}}% Add/update settings locally
  {}

\newtheorem{theorem}{Theorem}[section]
 \newtheorem{proposition}[theorem]{Proposition}
 \newtheorem{corollary}[theorem]{Corollary}
 \newtheorem{lemma}[theorem]{Lemma}
\theoremstyle{definition}
\newtheorem{definition}[theorem]{Definition}
\newtheorem{example}[theorem]{Example}
\newtheorem{remark}[theorem]{Remark}

\begin{document}

\title[Curves in Riemannian manifolds]{Curves in Riemannian Manifolds Making Prescribed Angles With Torse-Forming Vector Fields}

\author{Muhittin Evren Aydin$^1$}
\address{$^1$Department of Mathematics, Faculty of Science, Firat University, Elazığ,  23200 Türkiye
\newline
ORCID: 0000-0001-9337-8165}
\email{meaydin@firat.edu.tr}
\author{ Esra Dilmen$^2$}
 \address{$^2$Firat University, Graduate School of Natural and Applied Sciences, Mathematics,
Elazığ, Türkiye
\newline
ORCID: 0000-0001-6750-9475}

 \email{esradlmn23@gmail.com}
\author{ Büşra Karakaya$^3$}
 \address{$^3$Firat University, Graduate School of Natural and Applied Sciences, Mathematics,
Elazığ, Türkiye
 \newline
ORCID: 0009-0000-6285-3701}
 \email{busrakarakaya404@gmail.com}

\keywords{Torse-forming vector field, Riemannian manifold, curvature, torsion}
\subjclass{53A04; 53B20}
\begin{abstract}
In this paper, we introduce the notion of a prescribed angle curve in a Riemannian manifold associated with a pair $(\mathcal{V},\theta)$, where $\mathcal{V}$ is a unit vector field along the curve and $\theta$ denotes the angle between $\mathcal{V}$ and the principal normal vector of the curve. When $\mathcal{V}$ is a torse-forming vector field, we establish an existence result for prescribed angle curves. In the $3$-dimensional case, we determine the curvatures of these curves in terms of the prescribed angle and the potential function of $\mathcal{V}$. Moreover, using this notion, we provide a new characterization of curves lying on geodesic spheres in real space forms.
\end{abstract}
\maketitle

%%%%%%%%%%%%%%%%%%%%%%%%%%%%%%%%
\section{Introduction} \label{intro}
%%%%%%%%%%%%%%%%%%%%%%%%%%%%%%%%

In classical differential geometry, one of the common problems is the study of curves in Riemannian manifolds which make a prescribed angle between one of their Frenet vectors and a given vector field $\v$. Since many classes of curves were investigated from this perspective, the literature is extensive, as we will discuss below.

In the particular case of the Euclidean space, it is natural to assume that $\v$ is a parallel vector field, that is, a fixed direction $\vec{v}\in \r^m$. In this case $\vec{v}$ is called an {\it axis} of the given curve. One of the most well-known examples of such curves is the general helix (respectively, slant helix), that is, a curve whose tangent (respectively, principal normal) makes a constant angle with $\vec{v}$ \cite{it}. The classical Lancret theorem states that a curve is a general helix if and only if the ratio of its torsion to its curvature is constant \cite{lan}.

When extending the notion of helix to more general ambient spaces, the choice of an appropriate axis becomes an issue, as clearly emphasized in \cite{dgd}. To our knowledge, the first attempt in this manner was made by Hayden in 1930s \cite{hay}, assuming that the axis is a vector field $\v$ which is parallel along the curve. In more recent work \cite{dgd}, the authors performed this idea to study parallel slant helices in arbitrary Riemannian manifolds.

In another work, Barros \cite{bar} assumed that the axis is a vector field $\v$ which is Killing along the curve and obtained Lancret-type theorems in the sphere $\s^3(1)$ and the hyperbolic space $\h^3(-1)$. Following this idea, Lucas and Ortega-Yag\"ues \cite{loy2} defined and characterized slant helices in $\s^3(1)$. Nistor \cite{nis} classified curves in the homogenous space $\s^2\times \r$ whose tangent makes constant angle with Killling vector fields. More recently, L\'opez \cite{lo} provided a complete classification of general helices in $\r^3$ and $\h^3(-1)$ corresponding to all types of Killing vector fields.

In almost contact metric geometry, slant curves (respectively, $N$-slant curves) are defined as curves whose tangent (respectively, principal normal) makes a constant angle with the Reeb vector field \cite{agy,ccm,cil}. The term was also used for curves in a warped product $I \times_\rho \r^2$ whose tangent makes a constant angle with the vector field tangent to the interval $I$ \cite{ccra}. See also \cite{cs}.

In the early 2000s, Chen \cite{crc00} introduced the so-called rectifying curves in $\r^3$ whose principal normal is always perpendicular to the position vector. This notion has since been extended to other ambient spaces and higher dimensions \cite{dgd0,ggb,loy01,loy1}.

Denote by $\m^m(c)$ one the standard models of real space forms, i.e. $\r^m$, $\s^m(c)$, or $\h^m(-c)$. Recently, Lucas and Ortega-Yag\"ues \cite{loy4,loy5} defined a new class of curves, called concircular helices in $\m^m(c)$. Before proceeding, we recall that a vector field $\v$ on a Riemannian manifold $M^m$ is said to be {\it torse-forming} if it satisfies
\begin{equation*}
\nabla_X\v=fX+\omega(X) \v, \quad X\in \mathfrak{X}(M^m),
\end{equation*}
where $\nabla$ is the Levi-Civita connection on $M^m$, $f:M^m\to \r$ is a smooth function and $\omega$ is a $1$-form of $M^m$ \cite{ya0}. The function $f$ and $1$-form $\omega$ are called the potential function and generating form of $\v$, respectively. See also \cite{bo1,id,mih0}. When both $f$ and $\omega$ vanish identically, the vector field $\v$ becomes parallel trivially. This case is excluded from the present study.

There are several important subclasses of such vector fields. A torse-forming vector field $\v$ is said to be concircular if $\omega \equiv 0$  \cite{ya1}. Let $\w$ denote the vector field dual to the $1$-form $\omega$. Then, the vector field $\v$ is called torqued (respectively, anti-torqued) if $\w \perp \v$ (respectively, $\w = -f\v$) \cite{crs3,cs,dan,na}. These vector fields also have applications in physics, see \cite{c0,ds,mam1}.

Returning to the concept of a concircular helix, such a curve in the space $\m^m(c)$ is defined by the condition that the function $\langle N, \v \rangle$ is constant along the curve, where $N$ is the unit principal normal vector field of the curve and $\v$ is a globally defined concircular vector field on the ambient space \cite{loy4,loy5}. In the cited works, these curves were also classified. Following the same idea, the first author, together with Mihai and Özgür, introduced and characterized the notions of torqued and anti-torqued curves \cite{amc2}.

Denote by $\theta$ the angle function between the vectors $N$ and $\v$ along the curve. The condition that $\langle N, \v \rangle$ is a constant $k\in \r$, $k\neq 0$, imply that $\theta$ is constant only if the vector $\v$ has constant length along the curve. Notice here that a concircular vector field never has constant length on a whole Riemannian manifold (see \cite[Proposition 3.2]{amc}). Otherwise, the function $\theta=\cos^{-1}(k|\v|^{-1})$ is non-constant, although it is prescribed.

Curves whose principal normal vector makes a prescribed, but not necessarily constant, angle with a given vector field are of interest not only in differential geometry but also in geometric analysis. Certain examples are soliton solutions to the curve shortening flow \cite{kl,lo1}, curves minimizing the moment of inertia \cite{eul,lo3} and curves minimizing the gravitational potential energy \cite{die,lo2}.

By motivated the above definitions, we introduce the following.

\begin{definition}\label{int-def}
Let $\gamma:I\subset \r \to M^m$ be a Frenet curve in a connected orientable Riemannian $m$-manifold $M^m$, parametrized by arc length. Let $\v$ be a nowhere zero vector field along $\gamma$ and $\theta:I\to[0,\pi]$ be a smooth function. We say that $\gamma$ is a prescribed angle curve associated with the pair $(\v,\theta)$ if $\v$ has unit length along $\gamma$ and $\theta$ is the angle between $\v$ and the principal normal vector field  of $\gamma$.
\end{definition}

For brevity, such a curve will be called a PA curve with $(\v,\theta)$. When the dependence on $\v$ and $\theta$ is not emphasized, we simply call it a PA curve. 

In this paper, we consider PA curves associated with non-parallel torse-forming vector fields. Notice that the condition in Definition \ref{int-def} that the vector field $\v$ has constant length along the curve  is natural; see \cite{bar,dgd,lo,loy6,loy7}. Moreover, we prescribe the angle between $\v$ and the principal normal vector of the curve, rather than the tangent vector. This is because, when $\v$ is concircular, it is always orthogonal to the curve (see Corollary \ref{thm44}); hence prescribing the angle with the tangent vector would lead to a restrictive situation.

Assuming the existence of a vector field defined on the entire ambient space imposes strong restrictions on the geometry and topology of the space. For instance, no the space $\m^m(c)$ admits a globally defined torqued vector field whose potential function and generating form are both nowhere zero (see \cite[Theorem 4.1]{amc}, \cite[Theorems 1, 2]{dta}). 

Moreover, even assuming the existence of a vector field only along a curve may still impose significant constraints. For example, in the studies \cite{bar,lo,loy2}, the vector field considered as the axis is assumed to be Killing along the curve. However, in the space $\m^m(c)$, any vector field that is Killing along a curve extends to a Killing vector field on the entire ambient space (see \cite[Lemma 1]{bar}). Since Riemannian manifolds, in general, do not admit rich symmetry structures, the class of ambient spaces suitable for such studies is limited.

In contrast, this situation does not hold for torse-forming vector fields. To illustrate this, consider the unit circle in $\r^2$ given by $\gamma(s)=(\cos s,\sin s)$ and define a vector field along $\gamma$ by $\v(s)=\int^{s} (\cos u \,\gamma'(u))\,du.$ Then, $\v'(s)=\cos s\,\gamma'(s),$
which means that $\v$ is concircular along $\gamma$ with $\cos s$ potential function. However, it is known that the potential function of a globally defined concircular vector field on $\r^2$ must be constant (see \cite{loy4}). Hence, $\v$ cannot be extended to a concircular vector field on $\r^2$. 

This shows that, as opposed to Killing vector fields, assuming the existence of a torse-forming vector field along a curve provides a wider freedom in the choice of ambient spaces. Another illustration of this situation is the following: as emphasized above, although no the space $\m^m(c)$ admits a globally defined torqued vector field, such vector fields exist on suitable open subsets (see \cite[Example 3.1]{amc} and \cite[Example 1]{dta}).

We would like to express the advantages of the notion of a PA curve introduced in Definition \ref{int-def} as follows. Clearly, along a Frenet curve, the angle between a nowhere zero vector field $\v$ and the principal normal vector is well defined. However, imposing conditions on this angle, not necessarily requiring it to be constant, facilitates a better understanding of the geometric properties of the curve. For instance, the curvatures of such a curve can be determined only in terms of the prescribed angle and the potential function of the torse-forming vector field $\v$ (see Theorems \ref{thm42} and \ref{thm31}, Corollaries \ref{thm44} and \ref{lanc}).

Definition \ref{int-def} also provides new perspectives on the geometric behavior of curves. More explicitly, using the notion of PA curve, we give a new characterization of curves lying on geodesic spheres in the space $\m^m(c)$. We obtain that a curve lies on a geodesic sphere if and only if there exists a unit concircular vector field $\v$ along the curve with potential function $f=(a\tan \theta +b)^{-1}$, $a,b\in\r$, where $\theta$ is  the angle function between $\v$ and the principal normal vector (Theorem \ref{thm44spherical}). 

Furthermore, PA curves associated with concircular vector fields extend the Lancret theorem to arbitrary Riemannian manifolds (Corollary \ref{lanc}).

The organization of paper is the following. In Section \ref{pre}, we recall the Frenet formulas in Riemannian manifolds and give examples of globally defined torse-forming vector fields on warped products and real space forms. In Section \ref{sechig}, after establishing an existence result for PA curves, we determine the curvatures of PA curves in Riemannian manifolds of dimension greater than $2$ in terms of the potential function and the prescribed angle. In the final section, Section \ref{secdim2}, we study such curves in Riemannian surfaces. In both Sections \ref{sechig} and \ref{secdim2}, several illustrative examples are provided.

%%%%%%%%%%%%%%%%%%%%%%%%%%%%%%%%
\section{Preliminaries} \label{pre}
%%%%%%%%%%%%%%%%%%%%%%%%%%%%%%%%

Let $M^m$ be a connected orientable Riemannian $m$-manifold and let $\gamma:\,I\to M^m$ be a smooth curve, where $I\subset \r$ is an interval.  The curve $\gamma$ is said to be a Frenet curve if the vector fields $\{\gamma',\nabla_{\gamma'}\gamma',...,\nabla_{\gamma'}^{m-1}\gamma'\}$ are linearly independent along $I$, where $\nabla$ is the Levi-Civita connection on $M^m$.

If in addition, the curve $\gamma$ is parametrized by arc length $s\in I$, then there exist uniquely determined vector fields $\{X_1,\ldots,X_m\}$ along $\gamma$ and functions $\kappa_i:\,I\to \r$, with $\kappa_i >0$ for $1\leq i \leq m-1$, called  the curvatures of $\gamma$, such that:
\begin{enumerate}
\item[(i)] $\{X_1,\ldots,X_m\}$ is a positively oriented orthonormal frame along $\gamma$, called the Frenet frame of $\gamma$, with $X_1=\gamma'$,

\item[(ii)] for each point of $\gamma$, the sets $\{X_1=\gamma',\ldots,X_m\}$ and $\{\gamma',\nabla_{\gamma'}\gamma',...,\nabla_{\gamma'}^{m-1}\gamma'\}$ 
span the same subspace of $T_{\gamma(s)}M^m$,

\item[(iii)]  the Frenet formulas hold:
\begin{equation*}
\left. 
\begin{array}{l}
\nabla_{\gamma'}\gamma'=\kappa_1 X_2 \\ 
\nabla_{\gamma'}X_i =-\kappa_{j-1} X_{i-1}+\kappa_{i} X_{i+1} \\
\nabla_{\gamma'}X_{m}=-\kappa_{m-1} X_{m-1},
\end{array}%
\right. 
\end{equation*}
for $2\leq i \leq m-1 $ (see [Proposition 3.2]\cite{lmm}).
\end{enumerate}

Conversely, let $\gamma(s_0)=p_0\in M^m$ and $(e_1,...,e_m)$ a positively oriented orthonormal basis of $T_{p_0}M^m$. Giving smooth functions $k_i:\,(s_0-\delta,s_0+\delta)\to \r$, with $k_i >0$ for $1\leq i \leq m-1$, there exists $0< \epsilon \leq \delta$ and a unique Frenet curve $\gamma:\,(s_0-\epsilon,s_0+\epsilon)\to M^m$ such that
\begin{enumerate}
\item[(i)] $\gamma(s_0)=p_0$,
\item[(ii)] $X_i(s_0)=e_i$ for $1\leq i \leq m$,
\item[(iii)] $\kappa_i=k_i$ for $1\leq i \leq m-1$ (see [Theorem 3.6]\cite{lmm}).
\end{enumerate}

From now on, we denote $\gamma'=T$, $X_2=N$ and $X_{i+2}=B_i$, $1\leq i \leq m-2$. 

Since our framework is based on torse-forming vector fields, it is necessary to recall the notion of warped products, as such spaces provide natural examples of these vector fields.

Let $(M,g_1)$ and $(F,g_2)$ be two Riemannian manifolds.  A warped product $M \times_{\rho} F$ is the product manifold $M \times F$ endowed with the metric $\langle , \rangle=g_1+\rho^2g_2,$ 
where $\rho:\,M\to (0,\infty)$ is a smooth function, called the warping function \cite{cbook}.

Let $J\subset \r$ be an interval and consider a warped product $J\times_\rho F$. Denote by $t$ the arc length parameter on $J$ and $\partial_t:=\frac{\partial}{\partial_t}$. Then:
\begin{enumerate}
\item[(i)] the vector field $\v=\rho\partial_t$ is concircular on $J\times_\rho F$ with the potential function $f=\rho'$ (see \cite[Example 1.1]{cbook}).
\item[(ii)] The vector field $\v=h\rho\partial_t$ is torqued on $J\times_\rho F$ with the potential function $f=h$ and the generating form $\omega=dh$, where $h$ is a smooth function depending only on the fiber $F$, i.e. $\partial_t h=0$ \cite[Theorem 2.3] {c01}.
\item[(iii)] The vector field $\v=\partial_t$ is anti-torqued on $J\times_\rho F$ with $f=(\log \rho)'$  \cite[Theorem 3.1]{amc1}.
\end{enumerate}

We now provide examples of torse-forming vector fields on the space $\m^m(c)$.

\begin{enumerate}
\item[(i)] Let the ambient space be $\r^m$. Up to a constant vector field, any concircular vector field is of the form $\v=r\Phi$, where $\Phi$ is the position vector field on $\r^m$ and $r\in \r$ is a nonzero constant (see \cite[Remark 2.1] {c01} and \cite[Proposition 4.2]{c02}). In addition, the vector field $\v=\frac{\Phi}{|\Phi|}$ is anti-torqued on the connected manifold $\r^m\setminus\{0\}$ with the potential function $f=\frac{1}{|\Phi|}$ \cite[Example 5]{dan}. 
\item[(ii)] Let the ambient space be $\s^m(c)\subset\r^{m+1}$ or $\h^m(-c)\subset\r^{m+1}_1$. Then any concircular vector field is given by the tangential component of a constant vector field $\vec{v}\in \r^{m+1}$ or $\vec{v}\in \r^{m+1}_1$, respectively, where $ \r^{m+1}_1$ denotes the Lorentz-Minkowski space \cite[Theorem 1]{loy5}.
\item[(iii)] Consider the upper half-space model of $\h^m(-1)$, $$\h^m(-1)=\{(x_1,...,x_m)\in \r^m:x_m>0\}$$ endowed with the metric $\langle,\rangle_h=x_m^{-2}\langle,\rangle$, where $\langle,\rangle$ is the Euclidean metric. Then the vector field $\v=-x_m\partial_{x_m}$ is anti-torqued with the potential function $f\equiv1$ \cite[Example 3.3]{amc}.
\end{enumerate}

We finish this section recalling the following result.

\begin{proposition} \cite{cbook} \label{warp-con}
Let $M=M_1 \times_{\rho} M_2$ be a warped product, and let $\nabla^0$ and $\nabla^i$ be the Levi-Civita connections on $M$ and $M_i$, respectively. Let also $X_i,Y_i \in \mathcal{L}(M_i)$. Then,
\begin{enumerate}
\item[(i)]  $\nabla^0_{X_1}Y_1 \in \mathcal{L}(M_1) $ is the lift of $\nabla^1_{X_1}Y_1$ on $M_1$.
\item[(ii)]  $\nabla^0_{X_1}X_2 =\nabla^0_{X_2}X_1 =(X_1 \log \rho)X_2$.
\item[(iii)]  $\textnormal{tan}(\nabla^0_{X_2}Y_2)$ is the lift of $\nabla^2_{X_2}Y_2$ on $M_2$.
\item[(iv)]  $\textnormal{nor}(\nabla^0_{X_2}Y_2)=-\frac{\langle X_2,Y_2\rangle}{\rho}\nabla^0\rho$.
\end{enumerate}
\end{proposition}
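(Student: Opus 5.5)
We prove all four formulas with the Koszul formula
\begin{align*}
2\langle \nabla^0_XY,Z\rangle ={}& X\langle Y,Z\rangle+Y\langle X,Z\rangle-Z\langle X,Y\rangle\\
&+\langle [X,Y],Z\rangle-\langle [X,Z],Y\rangle-\langle [Y,Z],X\rangle,
\end{align*}
applied to lifts $X,Y,Z\in\mathcal L(M_1)\cup\mathcal L(M_2)$. Since lifts span every tangent space of $M$, a vector field is determined by its inner products against all such $Z$. We first record three elementary facts about lifts.
\begin{enumerate}
\item[(a)] If $X_i,Y_i\in\mathcal L(M_i)$, then $[X_i,Y_i]$ is the lift of $[X_i,Y_i]$ on $M_i$. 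Lifts from different factors commute: $[X_1,X_2]=0$.
\item[(b)] $\langle X_1,X_2\rangle=0$ and $\langle X_1,Y_1\rangle=g_1(X_1,Y_1)\circ\pi_1$. Hence $\langle X_1,Y_1\rangle$ is constant along the fibers $\{p\}\times M_2$, and so $X_2\langle X_1,Y_1\rangle=0$.
\item[(c)] $\langle X_2,Y_2\rangle=\rho^2\, g_2(X_2,Y_2)\circ\pi_2$. Since $\rho$ depends only on $M_1$ and $g_2(X_2,Y_2)\circ\pi_2$ is constant along the leaves $M_1\times\{q\}$, we get
\[
X_1\langle X_2,Y_2\rangle=2\rho\,(X_1\rho)\,g_2(X_2,Y_2)=2(X_1\log\rho)\langle X_2,Y_2\rangle .
\]
\end{enumerate}
Also, $\rho$ is a function on $M_1$, so $\nabla^0\rho$ is the lift of $\nabla\rho$ on $M_1$ and is horizontal, i.e. tangent to $M_1$.

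For (i), take $Z=Z_2\in\mathcal L(M_2)$. Each term in the Koszul formula for $2\langle\nabla^0_{X_1}Y_1,Z_2\rangle$ vanishes:
\begin{itemize}
\item the derivative terms vanish by (b);
\item $\langle[X_1,Y_1],Z_2\rangle=0$ because $[X_1,Y_1]$ is horizontal by (a);
\item $[X_1,Z_2]=[Y_1,Z_2]=0$ by (a).
\end{itemize}
So $\nabla^0_{X_1}Y_1$ is horizontal. Now take $Z=Z_1$. Every term is the lift of the corresponding term computed for $g_1$ on $M_1$, so $\nabla^0_{X_1}Y_1$ is the lift of $\nabla^1_{X_1}Y_1$.

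For (ii), torsion-freeness together with $[X_1,X_2]=0$ gives $\nabla^0_{X_1}X_2=\nabla^0_{X_2}X_1$.
\begin{itemize}
\item Pairing with $Z_1$, all terms vanish by (a) and (b), so $\nabla^0_{X_1}X_2$ is vertical.
\item Pairing with $Z_2$, the only surviving term is $X_1\langle X_2,Z_2\rangle=2(X_1\log\rho)\langle X_2,Z_2\rangle$ by (c).
\end{itemize}
This gives $\nabla^0_{X_1}X_2=(X_1\log\rho)X_2$.

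For (iv), pair $\nabla^0_{X_2}Y_2$ with $Z_1$. The brackets $[X_2,Z_1]$ and $[Y_2,Z_1]$ vanish, $[X_2,Y_2]$ is vertical, and the derivative terms $X_2\langle Y_2,Z_1\rangle$ and $Y_2\langle X_2,Z_1\rangle$ vanish because those inner products are zero. Only $-Z_1\langle X_2,Y_2\rangle$ survives, and by (c) it equals
\[
-\frac{2\langle X_2,Y_2\rangle}{\rho}\,Z_1\rho=-\frac{2\langle X_2,Y_2\rangle}{\rho}\,\langle\nabla^0\rho,Z_1\rangle .
\]
Since $\nabla^0\rho$ is horizontal, this yields $\mathrm{nor}(\nabla^0_{X_2}Y_2)=-\frac{\langle X_2,Y_2\rangle}{\rho}\nabla^0\rho$.

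For (iii), pair $\nabla^0_{X_2}Y_2$ with $Z_2$. All inner products of vertical lifts are $\rho^2$ times lifts of $g_2$-values, and vertical lifts do not differentiate $\rho$. Therefore the right-hand side of the Koszul formula equals $\rho^2$ times the $g_2$-Koszul expression, i.e. $2\rho^2 g_2(\nabla^2_{X_2}Y_2,Z_2)=2\langle \widetilde{\nabla^2_{X_2}Y_2},Z_2\rangle$, where the tilde denotes the lift. Hence the vertical part of $\nabla^0_{X_2}Y_2$ is the lift of $\nabla^2_{X_2}Y_2$.

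The step needing the most care is (iii). We must check that the factor $\rho^2$ is constant along fibers, so that $X_2(\rho^2 g_2(\cdot,\cdot))=\rho^2X_2(g_2(\cdot,\cdot))$. We must also check that the vertical projection is exactly what pairing against vertical lifts detects, which holds because horizontal and vertical lifts are orthogonal.
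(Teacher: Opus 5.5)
The paper does not prove this proposition; it quotes it from the cited reference. Your argument is correct and is essentially the standard Koszul-formula proof behind that citation, as in O'Neill's treatment of warped products: pair against horizontal and vertical lifts, and use $[X_1,X_2]=0$, $X_2\rho=0$ and $X_1\langle X_2,Y_2\rangle=2(X_1\log\rho)\langle X_2,Y_2\rangle$.
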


%%%%%%%%%%%%%%%%%%%%%%%%%%%%%%%%
\section{PA curves in Riemannian manifolds} \label{sechig}
%%%%%%%%%%%%%%%%%%%%%%%%%%%%%%%%

In this section, we consider PA curves in a Riemannian $m$-manifold $M^m$ associated with non-parallel torse-forming vector fields. 

Let $\gamma:\,I\to M^m$ be a Frenet curve parametrized by arc length. Denote by $\{T,N,B_1,\ldots,B_{m-2}\}$ the Frenet frame of $\gamma$ and by $\kappa_i$, $1\leq i \leq m-1$, its curvatures. Assume that $\v$ is a unit torse-forming vector field along $\gamma$. Then,
\begin{equation}
\nabla_T\v=fT+\omega(T)\v, \label{sec3tor}
\end{equation}
where $f:\,I\to \r$ and $\omega(T):\,I\to \r$ are smooth functions along $\gamma$.

Since we assume that $\v$ has unit length along $\gamma$, it follows that $0=\langle \nabla_T\v,\v\rangle$. Using expression \eqref{sec3tor}, we obtain
\begin{equation}
f\langle \v, T\rangle+\omega(T)=0. \label{cdimm}
\end{equation}
With equation \eqref{cdimm}, we immediately have the following result.

\begin{lemma}\label{consleng}
Let $\gamma $ be a Frenet curve in a Riemannian manifold and $\v$ a unit torse-forming vector field along $\gamma$. Its potential function vanishes along $\gamma$ if and only if $\v$ is parallel along $\gamma$.
\end{lemma}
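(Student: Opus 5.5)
Both directions come straight from the torse-forming equation \eqref{sec3tor} together with the unit-length identity \eqref{cdimm}. The key observation is that \eqref{cdimm} ties the generating form along $\gamma$ to the potential function:
\[
\omega(T) = -f\langle \v, T\rangle .
\]
The plan is to substitute this back into \eqref{sec3tor} and read off each implication.

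\emph{Potential function zero implies parallel.} Suppose $f\equiv 0$ along $\gamma$. Then \eqref{cdimm} forces $\omega(T)\equiv 0$. Inserting both into \eqref{sec3tor} gives $\nabla_T\v = 0$, so $\v$ is parallel along $\gamma$.

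\emph{Parallel implies potential function zero.} Suppose $\nabla_T\v=0$ along $\gamma$. Then \eqref{sec3tor} reads $fT+\omega(T)\v=0$. Pair this with $T$ and use $|T|=1$:
\[
f+\omega(T)\langle \v,T\rangle=0 .
\]
Substituting $\omega(T)=-f\langle\v,T\rangle$ gives
\[
f\bigl(1-\langle \v,T\rangle^2\bigr)=0 .
\]
At points where $\v$ is not $\pm T$, the Cauchy--Schwarz factor $1-\langle \v,T\rangle^2$ is nonzero, so $f=0$ there.

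The one delicate step is the set of points where $\v=\pm T$. At such a point we would have $\nabla_T\v=\pm\kappa_1 N$. Since $\gamma$ is Frenet, $\kappa_1>0$, so this vector is nonzero and contradicts $\nabla_T\v=0$. Hence $\v=\pm T$ never happens when $\v$ is parallel.

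An alternative for this step avoids the contradiction argument: take the inner product of $fT+\omega(T)\v=0$ with a unit vector orthogonal to $\v$. The coefficient of $T$ must vanish unless $T\parallel\v$, and the $\kappa_1>0$ remark rules that case out. Either way $f\equiv0$ along $\gamma$, which completes the equivalence.
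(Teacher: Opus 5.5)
Your forward direction is fine and is exactly what the paper has in mind; the paper's proof consists only of the remark that the lemma follows immediately from \eqref{cdimm}. The converse, however, contains a false step. You assert that at a point where $\v=\pm T$ one has $\nabla_T\v=\pm\kappa_1 N$, and conclude that $\v=\pm T$ never happens for parallel $\v$. A coincidence $\v(s_0)=\pm T(s_0)$ at a single point says nothing about the derivative there. You may differentiate $\v=\pm T$ only where it holds on an open interval. For example, take the unit circle $\gamma(s)=(\cos s,\sin s)$ in $\r^2$ and the parallel field $\v\equiv(0,1)$: then $\v=T$ at $s=0$, while $\nabla_T\v=0$. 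At such points the equation $fT+\omega(T)\v=0$ and \eqref{cdimm} reduce to the single relation $f=\mp\omega(T)$, so $f$ is not determined pointwise. Your alternative argument relies on the same remark and has the same gap.

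The repair is a density argument. The set $U=\{s\in I:\ |\langle\v,T\rangle|<1\}$ is open, and your computation $f(1-\langle\v,T\rangle^2)=0$ gives $f=0$ on $U$. Its complement has empty interior. Indeed, on an open interval $J$ where $\langle\v,T\rangle=\pm1$, continuity and connectedness force a constant sign $\epsilon$. Then $\v=\epsilon T$ on $J$, and differentiating gives $0=\nabla_T\v=\epsilon\kappa_1N$ on $J$, contradicting $\kappa_1>0$. Hence $U$ is dense in $I$, and since $f$ is continuous, $f\equiv0$ along $\gamma$. With this correction your proof becomes a complete version of the paper's one-line argument.
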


Assume now that $\gamma$ is a PA curve in $M^m$ with $(\v,\theta)$, where $\v$ is a torse-forming vector field. By its definition, there are smooth functions $\lambda_i:\,I\to \r$, $0\leq i \leq m-2$, defined along $\gamma$ such that
\begin{equation}
\left. 
\begin{array}{l}
\v=\lambda_0 T+\cos\theta N+\lambda_1 B_1+\ldots \lambda_{m-2}B_{m-2}, \\ 
1=\cos^2\theta+\sum_{i=0}^{m-2}\lambda_i^2.
\end{array}%
\right. \label{trs41}
\end{equation}
With this decomposition of $\v$, equation \eqref{cdimm} becomes
\begin{equation}
f\lambda_0+\omega(T)=0. \label{cdimm2}
\end{equation}

Differentiating $\v$ along $\gamma$ in the first expresion of \eqref{trs41} and using the Frenet formulas yields
\begin{equation}
\left. 
\begin{array}{l}
\nabla_T\v= (\lambda'_0-\cos\theta \kappa_1)T\\
+((\cos \theta)'+\lambda_0\kappa_1-\lambda_1\kappa_2)N \\
+(\lambda_1'+\cos\theta\kappa_2-\lambda_2\kappa_3)B_1\\
+(\lambda_i'+\lambda_{i-1}\kappa_{i+1}-\lambda_{i+1}\kappa_{i+2})B_i, \quad (2\leq i \leq m-3),\\
+(\lambda'_{m-2}+\lambda_{m-3}\kappa_{m-1})B_{m-2} .
\end{array}%
\right. \label{trs42}
\end{equation}
On the other hand, since $\v$ is a torse-forming vector field along $\gamma$, we have
$$
\nabla_T\v=(f+\lambda_0\omega(T))T+\omega(T)\cos\theta N+\omega(T)\sum_{i=1}^{m-2}\lambda_iB_{i}.
$$
Comparing this expression with \eqref{trs42} and then using \eqref{cdimm2}, we obtain
\begin{equation}
\left. 
\begin{array}{r}
f(1-\lambda_0^2)-\lambda'_0+ \kappa_1\cos\theta=0\\
-f\lambda_0\cos\theta+\theta'\sin \theta-\lambda_0\kappa_1+\lambda_1\kappa_2=0 \\
-f\lambda_0\lambda_1-\lambda_1'+\lambda_2\kappa_3-\kappa_2\cos\theta=0\\
-f\lambda_0\lambda_i-\lambda_i'-\lambda_{i-1}\kappa_{i+1}+\lambda_{i+1}\kappa_{i+2}=0 \\
-f\lambda_0\lambda_{m-2}-\lambda'_{m-2}-\lambda_{m-3}\kappa_{m-1}=0,
\end{array}%
\right. \label{trs43}
\end{equation}
where $2\leq i \leq m-3$.

Consequently, we have the following existence result of PA curves in Riemannian manifolds.

\begin{theorem}\label{prop41}
Let $\gamma $ be a Frenet curve in a connected orientable Riemannian $m$-manifold. Then, the curve $\gamma$ is a PA curve with $(\v,\theta)$, where $\v$ is a torse-forming vector field with potential function $f$ and generating form $\omega$, if and only if there are smooth functions $\theta,\lambda_0,\ldots\lambda_{m-2}$ along $\gamma$ such that $\v$ is given by \eqref{trs41}, the curvatures of $\gamma$ satisfy the system \eqref{trs43} and $\omega(T)=-f\langle \v,T\rangle$, where $T$ is the unit tangent vector.
\end{theorem}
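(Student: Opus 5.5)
The proof is essentially a repackaging of the computation preceding the statement, so I would organize it as two implications, both resting on the decomposition \eqref{trs41} and the comparison of \eqref{trs42} with the torse-forming condition \eqref{sec3tor}.

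\emph{Necessity.} Suppose $\gamma$ is a PA curve with $(\v,\theta)$ and $\v$ is torse-forming along $\gamma$ with potential function $f$ and generating form $\omega$. Expand the unit field $\v$ in the Frenet frame $\{T,N,B_1,\ldots,B_{m-2}\}$. By Definition \ref{int-def}, the $N$-component is $\langle \v,N\rangle=\cos\theta$. Set $\lambda_0=\langle\v,T\rangle$ and $\lambda_i=\langle \v,B_i\rangle$. These are smooth because $\v$ and the frame are smooth, and $|\v|=1$ gives the second line of \eqref{trs41}. Since $\v$ has unit length, \eqref{cdimm} holds, which is exactly $\omega(T)=-f\langle\v,T\rangle$. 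Next, differentiate \eqref{trs41} using the Frenet formulas to get \eqref{trs42}. Substitute \eqref{trs41} into \eqref{sec3tor} to get the other expression for $\nabla_T\v$. Equate coefficients of each frame vector and eliminate $\omega(T)$ via \eqref{cdimm2}. For instance, the $T$-coefficient gives $\lambda_0'-\kappa_1\cos\theta=f+\lambda_0\omega(T)=f-f\lambda_0^2$. The $N$-coefficient gives $-\theta'\sin\theta+\lambda_0\kappa_1-\lambda_1\kappa_2=-f\lambda_0\cos\theta$, and similarly for the $B_i$. This produces \eqref{trs43}.

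\emph{Sufficiency.} Conversely, let smooth $\theta,\lambda_0,\ldots,\lambda_{m-2}$ satisfy the normalization in \eqref{trs41} and \eqref{trs43}. Define $\v$ by \eqref{trs41} and $\omega(T):=-f\lambda_0$. Then $\v$ is a unit vector field along $\gamma$ whose angle with $N$ is $\theta$, since $\langle \v,N\rangle=\cos\theta$ with $\theta\in[0,\pi]$. It remains to check \eqref{sec3tor}. Compute $\nabla_T\v$ by \eqref{trs42} and use \eqref{trs43} to rewrite each coefficient. The $T$-coefficient becomes $f-f\lambda_0^2=f+\lambda_0\omega(T)$. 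The $N$-coefficient becomes $-f\lambda_0\cos\theta=\omega(T)\cos\theta$. Each $B_i$-coefficient becomes $-f\lambda_0\lambda_i=\omega(T)\lambda_i$. Hence $\nabla_T\v=fT+\omega(T)\v$, so $\v$ is torse-forming along $\gamma$ with the given data.

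\emph{Main obstacle.} No step is genuinely hard; the work is bookkeeping. The points needing care are the index shifts of the Frenet formulas in the $B_i$ components, the low-dimensional cases $m=3,4$ where some lines of \eqref{trs43} are absent, and the sign conventions when passing from $(\cos\theta)'$ to $-\theta'\sin\theta$. I would handle these by writing the generic $B_i$ line once and noting which lines degenerate for small $m$.
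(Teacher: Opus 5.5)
Your proposal is correct and follows the paper's argument. Necessity is exactly the coefficient comparison of \eqref{trs42} with \eqref{sec3tor} after eliminating $\omega(T)$ via \eqref{cdimm2}, and sufficiency defines $\v$ by \eqref{trs41} and uses \eqref{trs43} to obtain $\nabla_T\v=fT-f\lambda_0\v=fT+\omega(T)\v$. The only difference is that the paper's converse first reconstructs $\gamma$ from the $\kappa_i$ via the fundamental theorem for Frenet curves, a step you rightly omit since $\gamma$ is given in the statement.
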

\begin{proof}
It remains to prove the converse of the statement of the theorem. Let $\kappa_i\,:I\subset \r \to \r$, $1\leq i \leq m-1$ be smooth functions given by \eqref{trs43} and let $\gamma(s_0)=p_0\in M^m$. Then, there exists a unique curve $\gamma$ (see [Theorem 3.6]\cite{lmm})
with curvatures $\kappa_i$ and initial Frenet frame $\{T(s_0),N(s_0),B_1(s_0),\ldots,B_{m-2}(s_0)\}$. Next, define the vector field $\v$ along the curve $\gamma$ as in \eqref{trs41}. Then it has unit length and the angle between $\v$ and $N$ is given by $\theta$. Differentiating $\v$ along $\gamma$ and using the system \eqref{trs43}, we have
$$
\nabla_T\v = fT - f\lambda_0(\lambda_0T+\cos \theta\, N+\lambda_1B_1+\cdots+\lambda_{m-2}B_{m-2}).
$$
Using $\omega(T)=-f\langle \v,T\rangle$, it follows that the covariant derivative $\nabla_T\v $ is of the form \eqref{sec3tor}, which means that $\v$ is a torse-forming vector field along $\gamma$.
\end{proof}

In the particular $3$-dimensional case, writing $B_1=B$, $\kappa_1=\kappa$ and $\kappa_2=\tau$, the decomposition of $\v$ in \eqref{trs41} and the system \eqref{trs43} reduces to, respectively,
\begin{equation}
\v=\lambda_0T+\cos \theta N+\lambda_1 B, \quad |\v |=1, \label{trs44}
\end{equation}
and
\begin{equation}
\left. 
\begin{array}{r}
f(1-\lambda_0^2)-\lambda'_0+ \kappa\cos\theta=0 \\ [4pt]
-f\lambda_0\cos\theta+\theta'\sin \theta-\lambda_0\kappa+\lambda_1\tau=0 \\ [4pt]
f\lambda_0\lambda_{1}+\lambda'_{1}+\cos\theta\tau=0.
\end{array}%
\right. \label{trs45}
\end{equation}

In the following results, we derive expressions for $\kappa$ and $\tau$ in terms of $f$, $\lambda_0$ and $\lambda_1$, depending on whether $\cos \theta = 0$. In particular, the case $\theta = \frac{\pi}{2}$ significantly simplifies the system \eqref{trs45}.

\begin{theorem}\label{thm42}
Let $\gamma $ be a Frenet curve in a connected orientable Riemannian $3$-manifold and $\v$ a torse-forming vector field along $\gamma$ with potential function $f$. Then $\gamma$ is a PA curve with $(\v,\frac{\pi}{2})$ if and only if 
$$
\v=(\tanh p) T\pm (\sech p) B, \quad p:=\int^s f(u)du+r, \, r\in \r.
$$
In addition, the ratio of the curvatures of $\gamma$ is $\frac{\tau}{\kappa}=\sinh (p)$ and in particular this ratio is nonconstant.
\end{theorem}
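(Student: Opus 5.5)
Set $\theta=\frac{\pi}{2}$ in the decomposition \eqref{trs44} and in the system \eqref{trs45}. Then $\cos\theta=0$ and $\sin\theta=1$, so $\v=\lambda_0T+\lambda_1B$ with $\lambda_0^2+\lambda_1^2=1$. Since $\theta'=0$, the system \eqref{trs45} becomes
\begin{equation*}
\lambda_0'=f(1-\lambda_0^2),\qquad \lambda_1\tau=\lambda_0\kappa,\qquad \lambda_1'=-f\lambda_0\lambda_1 .
\end{equation*}
By Theorem \ref{prop41}, $\gamma$ is a PA curve with $(\v,\frac{\pi}{2})$ if and only if these three equations hold together with $\omega(T)=-f\lambda_0$. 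It therefore suffices to solve this reduced system.

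First I solve the Riccati-type equation $\lambda_0'=f(1-\lambda_0^2)$.

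\emph{Excluding $\lambda_0^2=1$.} Suppose $\lambda_0^2=1$ at some point. Then $\lambda_1=0$ there, and the second equation gives $\lambda_0\kappa=0$, which contradicts $\kappa>0$. Hence $|\lambda_0|<1$ on all of $I$.

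\emph{Integrating.} On $|\lambda_0|<1$ the substitution $\lambda_0=\tanh p$ is legitimate. It turns the equation into $p'=f$, so $p=\int^s f\,du+r$. Then $\lambda_1^2=1-\tanh^2p=\sech^2p$. Since $\lambda_1$ is continuous and never zero (it cannot vanish when $|\lambda_0|<1$), we get $\lambda_1=\pm\sech p$ with a constant sign.

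\emph{Consistency check.} The third equation is then automatic: differentiating $\pm\sech p$ gives $\mp f\tanh p\,\sech p=-f\lambda_0\lambda_1$. Alternatively, it follows from differentiating $\lambda_0^2+\lambda_1^2=1$ and using the first equation, since $\lambda_1\neq0$.

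For the converse, define $\v=(\tanh p)T\pm(\sech p)B$ with $p'=f$, and choose curvatures satisfying $\tau/\kappa=\sinh p$. Then all three equations of the reduced system hold. Theorem \ref{prop41}, with $\omega(T)=-f\tanh p$, then shows that $\v$ is torse-forming and that $\gamma$ is a PA curve with angle $\frac{\pi}{2}$.

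The curvature ratio comes from the middle equation:
\begin{equation*}
\frac{\tau}{\kappa}=\frac{\lambda_0}{\lambda_1}=\pm\frac{\tanh p}{\sech p}=\pm\sinh p .
\end{equation*}
The sign can be absorbed by the choice of orientation of $B$ or of $p$; I will fix the $+$ sign to match the statement. For nonconstancy: if $\sinh p$ were constant, then $p$ would be constant, so $f=p'\equiv0$. By Lemma \ref{consleng}, $\v$ would then be parallel, which is excluded since we only consider non-parallel torse-forming fields.

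The main obstacle is the case $\lambda_0^2\equiv1$ in the Riccati equation, and more generally making sure $|\lambda_0|<1$ holds everywhere so that the $\tanh$ substitution is valid. The positivity $\kappa>0$ disposes of this, as shown above.
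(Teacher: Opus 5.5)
Your proof is correct and follows the paper's route: specialize \eqref{trs45} to $\theta=\frac{\pi}{2}$, use $\kappa>0$ to rule out $\v=\pm T$ at any point (the paper's case $\sin\vartheta=0$), integrate the remaining first-order equation to get $\tanh p$ and $\sech p$, and read $\tau/\kappa$ off the middle equation. Your substitution $\lambda_0=\tanh p$ is the paper's angle $\vartheta$ in other coordinates. You handle the sign more carefully than the paper, which silently takes $\sin\vartheta=\sech p$; note, however, that only reversing the orientation absorbs the minus branch, since $p$ is fixed by $p'=f$ up to the constant $r$ and cannot be adjusted.
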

\begin{proof}
If $\gamma$ is a PA curve with $(\v,\frac{\pi}{2})$, then $\langle\v,N\rangle=0$. By \eqref{trs44}, there is a smooth function $\vartheta$ defined along $\gamma$ such that $\v=\cos \vartheta T+\sin\vartheta B$.
Moreover, the system \eqref{trs45} is now
\begin{equation}
\left. 
\begin{array}{r}
\sin\vartheta(\vartheta'+f\sin\vartheta)=0 \\ [4pt]
\tau\sin\vartheta=\kappa \cos\vartheta \\ [4pt]
\cos\vartheta\left(\vartheta'+f\sin\vartheta\right)=0.
\end{array}%
\right. \label{simp41}
\end{equation}
If $\cos\vartheta=0$, then, by the first equation in \eqref{simp41}, $f$ would vanish along $\gamma$. Hence, from Lemma \ref{consleng} it follows that $\v$ becomes parallel along $\gamma$, which is not our case. Similarly, from the second equation in \eqref{simp41}, the case $\sin\vartheta=0$ is not possible because  $\gamma$ is assumed to be a Frenet curve. Consequently, it follows from \eqref{simp41} that
$$
\vartheta'+f\sin\vartheta=0.
$$
Integrating this equation, we derive $\cos\vartheta=\tanh p$ and $ \sin\vartheta=\sech p.$ Finally, the ratio of $\kappa$ and $\tau$ is concluded by the second equation in \eqref{simp41}, which is non-constant because $f$ is non-vanishing along $\gamma$. The proof of the converse follows from direct calculations and Theorem \ref{prop41}.

\end{proof}

\begin{remark}
Recall from \cite[Theorem 1]{dgd} that if $\v$ is parallel along a curve $\gamma\subset M^3$ and $\langle \v,N\rangle=0$, then the angle $\vartheta$ between the vectors $T$ and $\v$ is constant; hence, the ratio $\tau/\kappa = \cot \vartheta$ is also constant. Such curves are known as parallel generalized helices \cite{dgd}. On the other hand, if $\gamma$ is a PA curve in $M^3$ with $(\v,\frac{\pi}{2})$, where $\v$ is a non-parellel unit torse-forming vector field (so that $\langle \v,N\rangle=0)$, the angle $\vartheta$ between $T$ and $\v$ is never constant. In contrast to the parallel case, although the relation $\tau/\kappa=\cot \vartheta$ still holds, the ratio $\tau/\kappa$ is not constant.
\end{remark}

We provide two examples of PA curves with $(\v,\frac{\pi}{2})$ in the ambient spaces where $\v$ is a globally defined anti-torqued vector field.

\begin{example}
Let $\Phi$ be the position vector field on $\r^3$ and consider the torse-forming vector field $\v=\frac{\Phi}{|\Phi|}$ globally defined on the connected manifold $M^3=\r^3\setminus \{0\}$. Its potential function is given by $f=\frac{1}{|\Phi|}$. Then, each rectifying curve $\gamma\subset M^3$ parametrized by arc length is a PA curve with $(\v,\frac{\pi}{2})$. Indeed,  since $\v$ is a unit anti-torqued vector field on $M^3$, so is along $\gamma$. Moreover, it is known from \cite[Theorem 1]{crc00} that $\gamma(s)=(s+b)T(s)+aB(s)$, $a,b\in\r,$ $a\neq0.$ Since $f(\gamma(s))=f(s)=\frac{1}{\sqrt{(s+b)^2+a^2}}$, we have, up to a suitable constant,
$$
p=\int^sf(\gamma(u))du=\tanh ^{-1}\left((s+b)f(s)\right).
$$
Hence, following the proof of Theorem \ref{thm42}, we verify that
$$
\cos \vartheta(s)=\tanh p=(s+b)f(s), \quad \sin \vartheta(s)=\sech p=af(s),
$$
and  $\frac{\tau}{\kappa}=\cot \vartheta=\frac{s+b}{a},$ which agrees with \cite[Theorem 2]{crc00}. Recall that the parametric equations of all rectifying curves in $\r^3$ are well-known (see \cite[Theorem 3]{crc00}).
\end{example}

\begin{example}
Let the ambient space be the warped product $M^3=J\times_{\rho(t)}\r^2$, where $J\subset \r$ is an interval and $\rho(t)=e^t$. It is known that $\v=\partial_t$ is an anti-torqued vector field with potential function $f(t)=(\log \rho(t))'=1$. Let $\phi(s)$ be a strictly increasing $C^1$ function on an interval $I\subset \r$ and consider the curve $\gamma:I\to M^3$ given by
$$
\gamma(s)=\left ( \log (\cosh s), \int^s \sech^2 u\cos \phi(u) du, \int^s \sech^2 u\sin \phi(u) du\right ).
$$
We will show that $\gamma$ is a PA curve with $(\v,\frac{\pi}{2})$ in $M^3$. It is direct to see that $\gamma$ is parametrized by arc length. Hence,
$$
T(s)=\left ( \tanh s, \sech^2 s\cos \phi(s), \sech^2 s\sin \phi(s)\right ).
$$
Choose the orthonormal frame $\{\partial_t,e_1,e_2\}$ on $M^3$, where $e_1=e^{-t}\partial_x$ and $e_2=e^{-t}\partial_y$. From Proposition \ref{warp-con}, the nonzero covariant derivatives of this frame are given by
$$
\nabla_{e_1}\partial_t=e_1, \quad \nabla_{e_2}\partial_t=e_2, \quad \nabla_{e_1}e_1=\nabla_{e_2}e_2=-\partial_t.
$$
With respect to this frame, we can write
$$
T(s)=\tanh s\, \partial_t + \sech s (\cos \phi(s)\, e_1+\sin \phi(s)\, e_2).
$$
Then
$$
\nabla_TT=\phi'(s)\sech s \, (-\sin \phi(s) \,e_1+\cos \phi(s) \,e_2),
$$
which implies
$$
\kappa(s)=\phi'(s)\sech s, \quad N(s)=-\sin \phi(s) \,e_1+\cos \phi(s) \,e_2.
$$
Thus, $N$ is orthogonal the vector field $\v=\partial_t$, namely, $\gamma$ is a PA curve with $(\partial_t,\frac{\pi}{2})$ in $M^3$. By a computation we obtain
$$
B(s)=\sech s \, \partial_t-\tanh s(\cos \phi(s)\, e_1+\sin \phi(s)\, e_2),
$$
which yields $\v(\gamma(s))=\tanh s\,T(s)+\sech s \, B(s)$. Moreover,
$$
\nabla_TB=-\phi'(s)\tanh s (-\sin \phi(s) \,e_1+\cos \phi(s) \,e_2),
$$
and hence $\tau(s)=\phi'(s)\tanh s $. Finally, $\tau/\kappa=\sinh s$.
\end{example}

\begin{remark}
Curves in $M^3=J\times_{\rho(t)} \r^2$ whose tangent vector makes a constant angle with the vertical vector field $\partial_t$ were completely classified in \cite[Theorem 4.1]{ccra}. These curves are known as slant curves in $M^3$. According to \cite[Proposition 3.3]{ccra}, the normal vector of a slant curve in $M^3$ cannot be orthogonal to $\partial_t$; in other words, a slant curve cannot be a PA curve with $(\partial_t,\frac{\pi}{2})$. This agrees with our result in Theorem \ref{thm42}.
\end{remark}

In the next result, we consider the case $\cos \theta \neq 0$. In this case, system \eqref{trs45} yields explicit expressions for $\kappa$ and $\tau$ in terms of $f$, $\lambda_0$ and $\lambda_1$.

\begin{corollary}\label{thm43}
Let $\gamma$ be a Frenet curve in a connected orientable Riemannian $3$-manifold and $\v$ a torse-forming vector field along $\gamma$ with potential function $f$. Then $\gamma$ is a PA curve with $(\v,\theta)$, where $\cos \theta \neq 0$, if and only if $\v$ is given by \eqref{trs44} and 
\begin{equation}\label{thm41kt1}
\kappa =\frac{\lambda_0'-f(1-\lambda_0^2)}{\cos\theta}, \quad \tau=-\frac{f\lambda_0\lambda_1+\lambda_1'}{\cos\theta},
\end{equation}
where $\lambda_0$ and $\lambda_1$ are smooth functions along $\gamma$.
\end{corollary}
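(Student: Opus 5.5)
The plan is to derive the corollary directly from Theorem \ref{prop41}, specialised to $m=3$, that is, from the decomposition \eqref{trs44} together with the system \eqref{trs45}. The first and third equations of \eqref{trs45} are linear in $\kappa$ and $\tau$ respectively, each with coefficient $\cos\theta$. Since $\cos\theta\neq 0$, we can divide and obtain exactly the expressions \eqref{thm41kt1}.

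For the forward direction, assume $\gamma$ is a PA curve with $(\v,\theta)$. Then Theorem \ref{prop41} gives smooth $\lambda_0,\lambda_1$ with $\v$ as in \eqref{trs44}, and \eqref{trs45} holds. Solving the first equation for $\kappa$ gives $\kappa=(\lambda_0'-f(1-\lambda_0^2))/\cos\theta$. Solving the third equation for $\tau$ gives $\tau=-(f\lambda_0\lambda_1+\lambda_1')/\cos\theta$.

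For the converse, assume $\v$ is given by \eqref{trs44} and $\kappa,\tau$ are given by \eqref{thm41kt1}. Then the first and third equations of \eqref{trs45} hold automatically. The second equation must still be checked. I expect this to be the main (though mild) obstacle, and I will derive it from the unit-length constraint rather than impose it. Differentiate $\lambda_0^2+\cos^2\theta+\lambda_1^2=1$ to get
\[
\lambda_0\lambda_0'-\cos\theta\sin\theta\,\theta'+\lambda_1\lambda_1'=0 .
\]
Now take the second equation of \eqref{trs45} and multiply it by $\cos\theta$. Substitute $\kappa\cos\theta=\lambda_0'-f(1-\lambda_0^2)$ and $\tau\cos\theta=-(f\lambda_0\lambda_1+\lambda_1')$. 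The $f$-terms collect to
\[
-f\lambda_0\bigl(\cos^2\theta-(1-\lambda_0^2)+\lambda_1^2\bigr)=0,
\]
which vanishes by the unit-length relation. The remaining terms are
\[
\theta'\sin\theta\cos\theta-\lambda_0\lambda_0'-\lambda_1\lambda_1',
\]
which vanish by the differentiated constraint. Dividing by $\cos\theta\neq0$ then recovers the second equation of \eqref{trs45}.

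Finally, set $\omega(T)=-f\lambda_0=-f\langle\v,T\rangle$, as required in Theorem \ref{prop41}. With this choice all hypotheses of Theorem \ref{prop41} are met, so $\gamma$ is a PA curve with $(\v,\theta)$ and $\v$ is torse-forming. Two points need a brief remark. Positivity of $\kappa$ is implicit, since $\gamma$ is assumed Frenet. The angle $\theta$ is recovered as the angle between $\v$ and $N$ directly from \eqref{trs44}.
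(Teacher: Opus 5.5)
Your proposal is correct and takes the same route as the paper. The paper gives no separate proof: it simply solves the first and third equations of \eqref{trs45} for $\kappa$ and $\tau$ when $\cos\theta\neq 0$, and implicitly relies on Theorem \ref{prop41} for the converse. Your check that the second equation of \eqref{trs45} then holds automatically fills a step the paper leaves unstated, and it is genuinely needed to invoke Theorem \ref{prop41} in the converse. The reason it holds is that $\cos\theta$ times that equation reduces to the unit-length relation and its derivative; equivalently, the $\v$-component of $\nabla_T\v=fT-f\lambda_0\v$ is identically satisfied once $|\v|=1$.
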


In the particular case that $\v$ is a concircular vector field along the curve, the following result provides a direct relation between the prescribed angle $\theta$ and the curvatures of PA curves.

\begin{corollary}\label{thm44}
Let $\gamma$ be a Frenet curve in a connected orientable Riemannian $3$-manifold and $\v$ a  concircular vector field along $\gamma$ with potential function $f$. Then $\gamma$ is a PA curve with $(\v,\theta)$, where $\cos \theta \neq 0$, if and only if $\v=\cos \theta N+\sin\theta B$ and
\begin{equation}
\kappa=-\dfrac{1}{\cos \theta } f, \quad \tau=- \theta'. \label{thm41kt2}
\end{equation}
\end{corollary}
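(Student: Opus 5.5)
The plan is to specialize the general system \eqref{trs45} to the case $\omega\equiv 0$ and feed the resulting constraints back into Corollary \ref{thm43}.

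\textbf{Step 1: $\lambda_0=0$.} For a concircular $\v$ we have $\omega(T)=0$, so \eqref{cdimm2} becomes $f\lambda_0=0$. Since $\v$ is not parallel along $\gamma$, Lemma \ref{consleng} tells us that $f$ does not vanish along $\gamma$. Hence $\lambda_0=\langle \v,T\rangle=0$. (If $f$ has isolated zeros, we argue on the open set where $f\neq0$ and extend by continuity.) This is exactly the orthogonality of $\v$ to the curve announced in the introduction.

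\textbf{Step 2: the form of $\v$.} The unit-length condition in \eqref{trs44} now reduces to $\cos^2\theta+\lambda_1^2=1$, so $\lambda_1=\pm\sin\theta$. Since $\theta\in[0,\pi]$, we have $\sin\theta\ge 0$. The sign is fixed by the orientation of $B$: replacing $\theta$ by its natural continuation, or adjusting the sign choice where $\sin\theta\neq 0$, we normalize to $\v=\cos\theta N+\sin\theta B$.

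\textbf{Step 3: the curvatures.} We substitute $\lambda_0=0$ and $\lambda_1=\sin\theta$ into \eqref{thm41kt1} of Corollary \ref{thm43}, which applies because $\cos\theta\neq0$. The first formula gives $\kappa=-f/\cos\theta$. The second gives
\[
\tau=-\frac{(\sin\theta)'}{\cos\theta}=-\frac{\theta'\cos\theta}{\cos\theta}=-\theta'.
\]
As a consistency check, the middle equation of \eqref{trs45} reads $\theta'\sin\theta+\lambda_1\tau=0$, and it is satisfied identically by these choices.

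\textbf{Converse.} Define $\v=\cos\theta N+\sin\theta B$ along a Frenet curve whose curvatures are given by \eqref{thm41kt2}. A direct Frenet computation gives
\[
\nabla_T\v=-\kappa\cos\theta\, T+(\tau\sin\theta-\theta'\sin\theta)N+(\theta'\cos\theta-\tau\cos\theta)B.
\]
With $\tau=-\theta'$ the normal and binormal terms do not cancel unless $\theta'=0$, so the sign convention has to be rechecked. This sign bookkeeping, that is, matching the choice $\lambda_1=\pm\sin\theta$ with the sign of $\tau$, is the main obstacle.

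To resolve it, I would redo the computation with $\lambda_1$ left general and impose the two equations of \eqref{trs45}. The third equation gives $\tau\cos\theta=-\lambda_1'$; with $\lambda_1=\sin\theta$ this is $\tau=-\theta'$. The second equation gives $\theta'\sin\theta+\tau\sin\theta=0$, which is consistent with the third, so the system is compatible. Once the orientation is fixed so that $\nabla_T\v=fT$, Theorem \ref{prop41} then gives the converse.
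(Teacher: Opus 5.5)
Your forward direction (Steps 1--3) is the paper's argument. With $\omega\equiv 0$ and $f\neq 0$, equation \eqref{cdimm2} forces $\lambda_0=0$. Unit length then determines $\lambda_1$, and substituting into \eqref{thm41kt1} gives $\kappa=-f/\cos\theta$ and $\tau=-\theta'$.

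One caveat in Step 2: the sign of $\lambda_1$ is not ``fixed by the orientation of $B$''. The field $B$ is determined by the positively oriented Frenet frame and cannot be flipped. Unit length only yields $\lambda_1=\pm\sin\theta$. If $\lambda_1=-\sin\theta$, the third equation of \eqref{trs45} gives $\tau=+\theta'$ instead. The paper also simply writes $\lambda_1=\sin\theta$. The honest fix is the one you allude to: take $\theta$ as a smooth signed angle in $\mathrm{span}\{N,B\}$, so that $\v=\cos\theta N+\sin\theta B$ by construction and $\theta$ may leave $[0,\pi]$. Alternatively, state the result as $\v=\cos\theta N\pm\sin\theta B$, $\tau=\mp\theta'$.

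The converse, as you wrote it, contains an actual error. Your displayed derivative of $\cos\theta N+\sin\theta B$ uses $\nabla_TN=-\kappa T-\tau B$ and $\nabla_TB=\tau N$. That is the opposite torsion sign from the Frenet formulas of Section~\ref{pre}. With $\nabla_TN=-\kappa T+\tau B$ and $\nabla_TB=-\tau N$ one gets
\[
\nabla_T\v=-\kappa\cos\theta\,T-(\theta'+\tau)\sin\theta\,N+(\theta'+\tau)\cos\theta\,B .
\]
So $\tau=-\theta'$ kills the $N$- and $B$-components identically, and $\kappa=-f/\cos\theta$ gives $\nabla_T\v=fT$.

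There is no ``main obstacle'' and no orientation to adjust. As written, your proposal asserts a false claim (no cancellation unless $\theta'=0$) that contradicts the system \eqref{trs45} you then invoke. The phrase ``once the orientation is fixed'' is not a step that can be carried out; replace that paragraph by the correct computation.

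This direction is in any case nearly immediate. By hypothesis $\v$ is concircular with potential $f$, and $\v=\cos\theta N+\sin\theta B$ is a unit field with $\langle\v,N\rangle=\cos\theta$, which is exactly Definition~\ref{int-def}. Equivalently, it is the ``if'' part of Corollary~\ref{thm43} with $\lambda_0=0$, $\lambda_1=\sin\theta$, which is how the paper closes.
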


\begin{proof}
Since $\v$ is concircular along the curve $\gamma$, its generating form vanishes identically along $\gamma$. Moreover, as $\v$ is not parallel along $\gamma$, it follows directly from \eqref{cdimm2} that $\v$ is orthogonal to $\gamma$, that is, $\lambda_0 = 0$. Then, from \eqref{trs44}, we obtain $\lambda_1 = \sin \theta$ and hence $\v=\cos \theta N+\sin\theta B$. The proof is concluded by substituting $\lambda_0 = 0$ and $\lambda_1 = \sin \theta$ into \eqref{thm41kt1}.
\end{proof}

When $f$ is constant, it is interesting to note that the curvatures in \eqref{thm41kt2} satisfy the following condition
\begin{equation}
\left (\frac{1}{\tau} \left (\frac{1}{\kappa} \right )'\right )'+\frac{\tau}{\kappa}=0, \quad \tau \neq 0. \label{geosph}
\end{equation}
This condition is important because, when the ambient space is the space form $\m^3(c)$, equation \eqref{geosph} gives a necessary and sufficient condition for Frenet curves with nonzero torsion to lie on geodesic spheres (see \cite[Theorem 4]{dgd1}). 
Recall that geodesic spheres in $\m^3(c)$ are totally umbilical hypersurfaces. In particular, in $\s^3(c)\subset \r^4$ and $\h^3(-c)\subset \r^4_1$, they are obtained as intersections of these hypersurfaces with hyperplanes.

Combining condition \eqref{geosph} with Corollary \ref{thm44}, we obtain the following result, which provides a new characterization of curves lying on geodesic spheres.

\begin{theorem}\label{thm44spherical}
An arc length parametrized Frenet curve $\gamma\subset\m^3(c)$ with nonzero torsion lies on a geodesic sphere if and only if there is a unit concircular vector field $\v$ along $\gamma$ with potential function $f=\frac{1}{a\tan \theta +b}$, where $\theta=\cos^{-1}(\langle \v,N\rangle)$ and $a,b\in \r$ are constants.
\end{theorem}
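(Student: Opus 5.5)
The plan is to reduce both directions to the curvature criterion \eqref{geosph} of \cite[Theorem 4]{dgd1}, which characterizes Frenet curves in $\m^3(c)$ with $\tau\neq 0$ lying on geodesic spheres. Corollary \ref{thm44} translates a unit concircular $\v$ along $\gamma$ into the relations $\kappa=-f/\cos\theta$ and $\tau=-\theta'$, with $\v=\cos\theta N+\sin\theta B$. Throughout, $\cos\theta\neq0$ is implicit, since $\tan\theta$ must be defined for $f=\frac{1}{a\tan\theta+b}$ to make sense.

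\emph{Sufficiency.} Suppose $\v$ is a unit concircular vector field along $\gamma$ with $f=\frac{1}{a\tan\theta+b}$. By Corollary \ref{thm44},
\[
\frac{1}{\kappa}=-\frac{\cos\theta}{f}=-(a\sin\theta+b\cos\theta), \qquad \tau=-\theta'.
\]
Here $\theta'\neq0$ because $\tau\neq0$. A direct computation then gives the following chain:
\[
\Big(\frac1\kappa\Big)'=-(a\cos\theta-b\sin\theta)\theta',
\qquad
\frac1\tau\Big(\frac1\kappa\Big)'=a\cos\theta-b\sin\theta .
\]
Differentiating once more gives $-(a\sin\theta+b\cos\theta)\theta'$. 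On the other hand, $\tau/\kappa=\theta'(a\sin\theta+b\cos\theta)$. These two cancel, so \eqref{geosph} holds and $\gamma$ lies on a geodesic sphere.

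\emph{Necessity.} Suppose $\gamma$ lies on a geodesic sphere, so \eqref{geosph} holds. I will construct $\theta$ directly by setting $\theta(s)=-\int^s\tau(u)\,du+\theta_0$, so that $\tau=-\theta'$. Since $\tau\neq0$, $\theta$ is strictly monotone and can serve as a parameter on $\gamma$, with $\frac{d}{ds}=\theta'\frac{d}{d\theta}$. Write $u=1/\kappa$. Then $\frac{1}{\tau}u'=-\frac{du}{d\theta}$, and \eqref{geosph} becomes $-\theta'\frac{d^2u}{d\theta^2}+\theta' u=0$ after using $\tau u=-\theta' u$. This is $\frac{d^2u}{d\theta^2}+u=0$, so
\[
\frac1\kappa=-(a\sin\theta+b\cos\theta)
\]
for some constants $a,b$.

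Next I choose $\theta_0$, shrinking the interval if necessary, so that $\theta\in(0,\pi)$ and $\cos\theta\neq0$. With that choice I define
\[
f:=-\kappa\cos\theta=\frac{1}{a\tan\theta+b}, \qquad \v:=\cos\theta N+\sin\theta B.
\]
The field $\v$ is clearly unit, and since $\theta\in(0,\pi)$ we have $\theta=\cos^{-1}\langle\v,N\rangle$. Using the Frenet formulas together with $\tau=-\theta'$, the $N$ and $B$ components of $\nabla_T\v$ cancel, leaving $\nabla_T\v=-\kappa\cos\theta\,T=fT$. Hence $\v$ is concircular along $\gamma$ with potential function $f$, as required.

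The main obstacle is the bookkeeping in this converse direction: choosing the integration constant so that $\theta$ stays in $[0,\pi]$ and avoids $\cos\theta=0$. I expect to handle it locally, on a subinterval, or by absorbing sign changes into $a$ and $b$. The rest is a routine computation, checking that $\kappa>0$ is consistent with the sign of $a\sin\theta+b\cos\theta$.
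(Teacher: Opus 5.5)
Your sufficiency direction is the paper's argument: Corollary~\ref{thm44}, a direct check of \eqref{geosph}, then the cited criterion. It is fine, and the check works with either sign in $\tau=\pm\theta'$.

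For necessity you take a genuinely different route. The paper builds $\v$ geometrically from the center $p$ of the sphere: $\frac1r(\gamma-p)$ in $\r^3$, and the unit tangent of the radial geodesic in $\s^3(c)$ and $\h^3(-c)$. It does this case by case and always obtains a constant potential ($a=0$). You instead set $\theta=\theta_0-\int\tau$, turn \eqref{geosph} into $\frac{d^2u}{d\theta^2}+u=0$ for $u=1/\kappa$, and take $\v=\cos\theta N+\sin\theta B$ with $f=-\kappa\cos\theta$. This is uniform over the three space forms and uses nothing but \eqref{geosph}. It also produces every unit concircular field along $\gamma$ orthogonal to $T$ (one for each $\theta_0$, with $(a,b)$ rotating accordingly), not just the one with $a=0$. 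In effect it makes the remark after the theorem rigorous. There is a small slip: substituting $\tau u=-\theta' u$ gives $-\theta'\bigl(\frac{d^2u}{d\theta^2}+u\bigr)=0$, not $-\theta'\frac{d^2u}{d\theta^2}+\theta'u=0$, though your conclusion is the right one.

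The gap is the step you call bookkeeping. As written, you prove necessity only on subintervals, whereas the statement concerns the whole curve, and neither proposed remedy fixes this in general.

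\begin{itemize}
\item Requiring $\theta\in(0,\pi)$ with $\cos\theta\neq0$ forces the range of $\theta$ into an interval of length $\pi/2$. Positivity of $\kappa$ only confines it to an interval of length $\pi$. For example, $\tau\equiv-1$, $\kappa(s)=-1/(r\cos s)$ on $(\pi/2,3\pi/2)$ satisfies \eqref{geosph}, so it is a curve on a sphere of radius $r$ in $\r^3$, and no choice of $\theta_0$ works.
\item Absorbing sign changes into $a,b$ fails when $a\neq0$. Where $\cos\theta=0$, your $f$ vanishes but $\frac{1}{a\tan\theta+b}$ is undefined. Where $\theta$ passes $\pi$, replacing $\theta$ by $\cos^{-1}(\cos\theta)$ flips the sign of $\tan\theta$ and would require $a\mapsto -a$.
\end{itemize}

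The fix is to use your freedom in $\theta_0$ to rotate $(a,b)$ to $a=0$, $b>0$. Then $1/\kappa=-b\cos\theta$, so $\kappa>0$ gives $\cos\theta<0$ on all of $I$. Moreover $f\equiv 1/b$ no longer depends on $\theta$, so it does not matter whether $\theta$ stays in $[0,\pi]$: $\cos^{-1}\langle\v,N\rangle$ has the same nonzero cosine. This global field is exactly the paper's; in $\r^3$ it is $\frac1r(\gamma-p)$ with $b=r$.

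One caveat you share with the paper concerns $\h^3(-c)$. There, \eqref{geosph} holds for every curve with $\tau\neq0$ lying in any hyperplane section $\{\langle x,e\rangle=d\}$ with $d\neq0$, horospheres and equidistant surfaces included. So the ``if'' direction there only places $\gamma$ on a totally umbilical surface, not necessarily a geodesic sphere.
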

\begin{proof}
Assume that $\v$ is a unit concircular vector field along $\gamma$ with potential function $f=(a\tan \theta + b)^{-1}$, where $\cos\theta=\langle \v, N\rangle$ and $a,b\in \r$. By Definition \ref{int-def}, $\gamma$ becomes a PA curve with $(\v,\theta)$. Therefore, by \eqref{thm41kt2}, the curvatures of $\gamma$ satisfy
$$
\frac{1}{\kappa}=-(a\sin \theta + b\cos \theta), \quad \tau=\theta'.
$$
In this case, it is direct to verify that \eqref{geosph} holds. Consequently, by \cite[Theorem 4]{dgd1}, we conclude that $\gamma$ lies on a geodesic sphere.

Conversely, assume that a curve $\gamma$ in $\m^3(c)$ with nonzero torsion lies on a geodesic sphere centered at $p\in\m^3(c)$. Then its curvatures satisfy \eqref{geosph}. We divide the proof into three cases:

\textbf{Case:} $\m^3(c)=\r^3$. In this case, a geodesic sphere is a standard sphere in $\r^3$. Let $r>0$ be its radius. Then it is easy to see that
$$
\frac{1}{r}(\gamma-p)=-\frac{1}{\kappa} N-\frac{1}{\tau} \left (\frac{1}{\kappa} \right )' B.
$$
Next, define a vector field $\v=\frac{1}{r}(\gamma-p)$. Since $\nabla_{\gamma'}\v=\frac{1}{r}\gamma'$, it follows that $\v$ is a unit concircular vector field along $\gamma$ with potential function $f=\frac{1}{r}$, which corresponds to the case $a=0$ and $b=r$ in the statement of the theorem.

\textbf{Case:} $\m^3(c)=\s^3(c)\subset \r^4$. There exists an arc length parametrized geodesic $\beta$ passing through a point $p\in \s^3(c)$ such that it connects $p$ to $\gamma(s)$ and is orthogonal to $\gamma$. Denoting by $T_\beta$ the unit tangent vector field of $\beta$, we write (see \cite[Equation (27)]{dgd1})
$$
T_\beta=-\frac{1}{c}\cot\left (\frac{r}{c} \right) \left ( \frac{1}{\kappa}N+\frac{1}{\tau} \left (\frac{1}{\kappa} \right )' B \right ),
$$
where $r<c\pi/2$. Let $\nabla$ and $\nabla^0$ be the Levi-Civita connections of $\s^3(c)$ and $\r^4$, respectively. Then, the Gauss formula is written by
$$
\nabla^0_XY=\nabla_XY-\frac{1}{c^2}\langle X,Y\rangle \Phi,
$$
where $\Phi$ is the standard immersion of $\s^3(c)$ into $\r^4$. Hence, by taking the covariant derivative, we obtain  $\nabla_T T_\beta=\frac{1}{c}\cot\left (\frac{r}{c} \right) T.$ Defining the vector field $\v=T_\beta$, we see that it is a unit concircular vector field along $\gamma$ with potential function $f=\frac{1}{c}\cot\left (\frac{r}{c} \right),$ which corresponds to the case $a=0$ and $b=c\tan\left (\frac{r}{c} \right)$ in the statement of the theorem.

\textbf{Case:} $\m^3(c)=\h^3(-c)$. The proof is analogous, replacing $\cot$ with $\coth$.
\end{proof}

\begin{remark}
Notice that the potential function $f=(a\tan \theta + b)^{-1}$ in Theorem \ref{thm44spherical} arises from solving an ODE, as explained below. Assume that a PA curve in $\m^3(c)$ satisfies the hypothesis of Corollary \ref{thm44}. Then its curvatures are given by \eqref{thm41kt2}. Suppose further that this curve lies on a geodesic sphere. Substituting the expressions for the curvatures from \eqref{thm41kt2} into \eqref{geosph}, we obtain
$$
f''-\left (\frac{\theta''}{\theta'}+2\theta'\tan \theta \right)f'-2\frac{f'^2}{f}=0.
$$
The case where $f$ is constant provides a trivial solution. Assume now that $f$ is nonconstant. Introducing the change of variable $y=1/f$, we obtain
$$
y''-\left (\frac{\theta''}{\theta'}+2\theta'\tan \theta \right)y'=0.
$$
Since $\theta'=\tau\neq 0$, we may view $y$ as a function of $\theta$, that is, $y=y(\theta)$. Then the above equation reduces to
$$
\frac{d^2y}{d\theta^2}-2\tan\theta\frac{dy}{d\theta}=0.
$$
Its general solution is given by $y(\theta)=a\tan\theta+b$, where $a,b\in \r$.
\end{remark}

\begin{remark}
The proof of Theorem \ref{thm44spherical} provides a method for establishing concircular vector fields along curves. Let $\gamma$ lie on a geodesic sphere centered at $p$. If the ambient space is $\s^3(c)$ or $\h^3(-c)$, the proof shows that the tangent vector field of the geodesic joining the center $p$ to the point $\gamma(s)$ is concircular along the curve. An analogous statement also holds in $\r^3$. Indeed, the geodesic joining a point $p\in \r^3$ to $\gamma(s)$ is the ray parametrized by $t\, \mapsto p+t(\gamma(s)-p)$, $t\geq 0$. At the point $\gamma(s)$, the tangent vector to this ray is $\gamma(s)-p$, which defines a concircular vector field along the curve $\gamma$.
\end{remark}

Another consequence of Corollary \ref{thm44} is the following.

\begin{corollary} \label{lanc}
Let $\gamma$ be a PA curve with $(\v,\theta)$ in a Riemannian $3$-manifold, where $\v$ is a concircular vector field along $\gamma$ with potential function $f$ and $\cos \theta \neq 0$. Then the ratio of its torsion to curvature is a nonzero constant $r_0\in \r$ if and only if
$$\theta(s)=\sin^{-1}\left(r_0\int^sf(u)du+r_1\right), \quad r_1\in \r.$$
In particular, if $f$ is nonzero constant $f_0$, then, up to an integration constant, 
\begin{equation}
\kappa(s)=\pm\frac{f_0}{\sqrt{1-(r_0f_0s)^2}}, \quad \tau(s)=r_0 \kappa(s), \quad  -\frac{1}{|r_0f_0|}<s<\frac{1}{|r_0f_0|}. \label{conratio}
\end{equation}
\end{corollary}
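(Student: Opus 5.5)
The plan is to read everything off Corollary \ref{thm44}. Since $\v$ is concircular along $\gamma$ and $\cos\theta\neq 0$, that corollary gives
$$
\kappa=-\frac{f}{\cos\theta},\qquad \tau=-\theta'.
$$
Here $f$ is nowhere zero along $\gamma$ by Lemma \ref{consleng}, because $\v$ is non-parallel. Hence $\kappa$ is well defined and positive, which forces $f/\cos\theta<0$. Forming the ratio gives
$$
\frac{\tau}{\kappa}=\frac{\theta'\cos\theta}{f}=\frac{(\sin\theta)'}{f}.
$$

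Both directions of the equivalence then follow from this identity. If $\tau/\kappa=r_0$ is a nonzero constant, then $(\sin\theta)'=r_0 f$. Integrating gives $\sin\theta=r_0\int^s f(u)\,du+r_1$ for some $r_1\in\r$. Since $\cos\theta\neq0$, we may take $\theta$ in the branch where $\theta=\sin^{-1}(\cdot)$ holds; if needed, I would note that the sign of $\cos\theta$ only affects the sign convention and not the formula for $\sin\theta$. Conversely, if $\theta$ has this form, differentiating gives $\theta'\cos\theta=r_0 f$, so $\tau/\kappa=r_0$ by the displayed identity. The ratio is nonzero because $r_0\neq0$. This is consistent with $\tau=-\theta'\neq0$, since $\theta'\cos\theta=r_0f\neq0$.

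For the particular case $f\equiv f_0\neq0$, we have $\sin\theta=r_0f_0 s+r_1$. After translating the arc length parameter we may take $r_1=0$; this is the ``up to an integration constant'' clause. Then
$$
\cos\theta=\pm\sqrt{1-(r_0f_0s)^2},
$$
so
$$
\kappa=-\frac{f_0}{\cos\theta}=\pm\frac{f_0}{\sqrt{1-(r_0f_0s)^2}},
$$
where the sign is chosen so that $\kappa>0$. Also $\tau=r_0\kappa$ by the constant ratio. The requirement $\cos\theta\neq0$, together with $\sin\theta$ taking values in $(-1,1)$, gives exactly the interval $|s|<1/|r_0f_0|$ in \eqref{conratio}.

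There is no real obstacle; the computation is a direct consequence of \eqref{thm41kt2}. The only points needing care are bookkeeping: keeping track of the sign of $\cos\theta$ so that $\kappa>0$ (this is the source of the $\pm$), and justifying the parameter shift that absorbs $r_1$.
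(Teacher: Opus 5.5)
Your proposal is correct and follows the paper's route. The paper's proof is only the remark that everything follows from \eqref{thm41kt2}, and you carry out that computation explicitly via $\tau/\kappa=(\sin\theta)'/f$, taking more care than the paper over the branch of $\sin^{-1}$ and the sign of $\cos\theta$. One small correction: Lemma \ref{consleng} only gives $f\not\equiv 0$; that $f$ is nowhere zero follows instead from $\kappa>0$ (the Frenet condition) together with $f=-\kappa\cos\theta$.
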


\begin{proof}
The proof can be easily concluded by \eqref{thm41kt2}.
\end{proof}

In the next example, we construct such a curve in the Euclidean setting. In particular, when $f$ is constant, Theorem \ref{thm44spherical} implies that the curve lies on a standard sphere and we illustrate this case.

\begin{example}
Let $\gamma:(-1,1)\to \r^3$ be the arc length parametrized curve given by
$$
\gamma(s)=
\begin{pmatrix}
 a(s)\cos b(s)+\dfrac{s}{\sqrt{2}}\sin b(s)\\[8pt]
\dfrac{s}{2}(1-\cos b(s))+\dfrac{a(s)}{\sqrt{2}}\sin b(s) \\[8pt]
\dfrac{s}{2}(1+\cos b(s))-\dfrac{a(s)}{\sqrt{2}}\sin b(s)
\end{pmatrix},
$$
where $a(s):=\sqrt{1-s^2}$ and $b(s):=\sqrt{2}\sin^{-1}s.$ We will show that $\gamma$ is a PA curve with $(\v,\theta)$, where $\v$ is a unit concircular vector field along $\gamma$ and $\tau/\kappa$ is constant. A direct computation shows that $|\gamma(s)|=1$, and hence $\gamma\subset \s^2(1)$. Therefore, by choosing $\v=\gamma$, we obtain a unit concircular vector field along $\gamma$ with potential function $f\equiv 1$. Moreover, since $\gamma$ is parametrized by arc length, one computes that $\kappa(s)=\tau(s)=\frac{1}{a}.$ Also, $\cos\theta=\langle \v,N\rangle=-a$ and $\sin\theta=s.$ Since $f\equiv 1$, we have $\sin\theta=\int^s f(u)\,du$
up to a constant. Therefore, Corollary \ref{lanc} is verified with $r_0=1$.
\end{example}

%%%%%%%%%%%%%%%%%%%%%%%%%%%%%%%%
\section{PA curves in Riemannian surfaces} \label{secdim2}
%%%%%%%%%%%%%%%%%%%%%%%%%%%%%%%%

This section is devoted to study PA curves in Riemannian surfaces $M^2$. Let $\gamma:I\to M^2$ be a PA curve  with $(\v,\theta)$, where $\v$ is a torse-forming vector field. Then, \eqref{trs41} and \eqref{cdimm2} reduce to, respectively,
\begin{equation}
\v=\sin\theta T+\cos \theta N  \label{trs300}
\end{equation}
and
\begin{equation}
f\sin\theta+\omega(T)=0.  \label{trs301}
\end{equation}

By differentiating \eqref{trs300}, applying the Frenet formulas, and using the fact that $\v$ is a torse forming vector field, we obtain
\begin{equation}
\left. 
\begin{array}{r}
\cos \theta(f\cos \theta+\kappa-\theta') =0 \\ 
\sin \theta(f\cos \theta+\kappa-\theta')  =0.
\end{array}%
\right. \label{trs31}
\end{equation}

\begin{lemma}\label{lem31}
Let $\gamma$ be a PA curve in a Riemannian surface $M^2$ with $(\v,\theta)$, where $\v$ is a torse-forming vector field along $\gamma$ with potential function $f$ and $\cos \theta \neq 0$. Then,
\begin{equation}
\kappa=\theta'-\cos \theta f. \label{trs32}
\end{equation}
\end{lemma}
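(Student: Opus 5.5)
The plan is to read \eqref{trs32} directly off the first equation of the system \eqref{trs31}, after checking that this system is correctly derived. So the work has two parts: recompute \eqref{trs31}, then cancel the factor $\cos\theta$.

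\textbf{Setup.} In dimension two the Frenet frame is $\{T,N\}$. The Frenet formulas are
\[
\nabla_T T=\kappa N,\qquad \nabla_T N=-\kappa T .
\]
The PA condition together with $|\v|=1$ gives the decomposition \eqref{trs300}, $\v=\sin\theta\,T+\cos\theta\,N$. The sign of the $T$-component can be absorbed into the choice of $\theta$, as in \eqref{trs41} with $\lambda_0=\sin\theta$.

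\textbf{Two expressions for $\nabla_T\v$.} Differentiating \eqref{trs300} along $\gamma$ gives
\[
\nabla_T\v=(\theta'\cos\theta-\kappa\cos\theta)\,T+(-\theta'\sin\theta+\kappa\sin\theta)\,N .
\]
The torse-forming condition \eqref{sec3tor} gives $\nabla_T\v=fT+\omega(T)\v$. By \eqref{trs301} we substitute $\omega(T)=-f\sin\theta$, which yields
\[
\nabla_T\v=f(1-\sin^2\theta)\,T-f\sin\theta\cos\theta\,N=f\cos^2\theta\,T-f\sin\theta\cos\theta\,N .
\]

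\textbf{Comparing components.} Equating the $T$-components gives $\cos\theta\,(\theta'-\kappa-f\cos\theta)=0$. Equating the $N$-components gives $\sin\theta\,(\kappa-\theta'+f\cos\theta)=0$. These are exactly the two equations of \eqref{trs31}, up to an overall sign convention; the sign should be double-checked, since it determines the sign of $f\cos\theta$ in the final formula. Under the hypothesis $\cos\theta\neq0$, we divide the first equation by $\cos\theta$ and obtain $\kappa=\theta'-f\cos\theta$, which is \eqref{trs32}.

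The only delicate point is the sign bookkeeping in the comparison of coefficients. The division itself is immediate because $\cos\theta$ is assumed nowhere zero. The second equation of \eqref{trs31} is then automatically consistent, so it imposes no further condition.
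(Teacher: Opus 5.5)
Your argument is correct and follows the paper's route exactly: you differentiate \eqref{trs300}, compare with $fT+\omega(T)\v$ after substituting $\omega(T)=-f\sin\theta$ from \eqref{trs301}, recover \eqref{trs31}, and cancel the nowhere-zero factor $\cos\theta$. Your first equation is just the negative of the paper's, so your sign bookkeeping is right. One caveat, which the paper shares, concerns your claim that the sign of the $T$-component can be absorbed into $\theta$: with $\theta\in[0,\pi]$ as in Definition~\ref{int-def}, that component could be $-\sin\theta$, and the same computation then gives $\kappa=-\theta'-f\cos\theta$, so \eqref{trs300} really treats $\theta$ as a signed angle rather than absorbing a sign.
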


A direct consequence of Lemma \ref{lem31} follows when $\theta$ is constant. In this case, the curvature $\kappa$ is a constant multiplier of the potential function $f$ of $\v$. Another consequence of Lemma \ref{lem31} happens when $\cos \theta=\kappa$. In this case, as well as in the  Euclidean setting where $\v$ is parallel, the corresponding curves are known as grim reapers and are soliton solutions of the curve shortening flow \cite{lo1}. 

Let $e_1=(1,0)$ and $e_2=(0,1)$ be the canonical orthonormal frame of $\r^2$. Up to a rigid motion, a grim reaper satisfies $\kappa=\langle N,e_2\rangle$.  Therefore the arc length parametrization of such a curve is $\gamma(s)=(\tan^{-1}(\sinh s), \log (\cosh s)).$ The unit normal vector and the curvature are given by $N(s)=(-\tanh s,\sech s)$ and $\kappa(s)=\sech s,$ respectively (see \cite{lo1}).

In our framework, a grim reaper in $ \r^2$ can be viewed a PA curve with $(e_2,\cos^{-1}(\kappa))$. Notice here that $e_2$ is a trivially torse-forming (in fact, parallel) vector field on $\r^2$. In the following result, we prove that even when the corresponding vector field is non-parallel along the curve, the curvature function of every PA curve with $(\v,\cos^{-1}(\kappa))$ is of the same form.

\begin{theorem}\label{thm31}
Let $\gamma$ be a PA curve in a Riemannian surface $M^2$ with $(\v,\theta)$, where $\v$ is a torse-forming vector field along $\gamma$ with potential function $f$ and $\cos \theta=\kappa$. Then, up to an integration constant,
\begin{equation}
\kappa(s)=\sech\left (s+\int^s f(u) du \right) .\label{trs311}
\end{equation}
\end{theorem}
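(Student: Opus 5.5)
Since $\gamma$ is a Frenet curve, $\kappa>0$. The hypothesis $\cos\theta=\kappa$ therefore gives $\cos\theta\neq 0$, and Lemma \ref{lem31} applies, yielding $\kappa=\theta'-\cos\theta\, f$. Substituting $\cos\theta=\kappa$ into this relation turns it into a first-order ODE for $\theta$ alone. The plan is to solve that ODE explicitly and then read off $\kappa=\cos\theta$.

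First, I rewrite \eqref{trs32} as
$$\theta'=\cos\theta\,(1+f).$$
On the interval where $\cos\theta>0$, i.e. $\theta\in[0,\pi/2)$, this separates as $\sec\theta\, d\theta=(1+f)\,ds$. Integrating gives
$$\log(\sec\theta+\tan\theta)=s+\int^s f(u)\,du+c.$$
The left side equals $\tanh^{-1}(\sin\theta)$, and also equals the inverse Gudermannian of $\theta$. Hence, setting $q:=s+\int^s f(u)\,du$ and absorbing $c$ into the integration constant, we get $\sin\theta=\tanh q$.

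Second, since $\cos\theta\ge 0$, this gives $\cos\theta=\sqrt{1-\tanh^2 q}=\sech q$. Therefore $\kappa=\cos\theta=\sech\left(s+\int^s f(u)\,du\right)$, which is \eqref{trs311}.

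Two points need a careful remark. The first is the sign and range of $\theta$: positivity of $\kappa$ forces $\cos\theta>0$, so $\theta$ never reaches $\pi/2$ and the separation of variables is legitimate on all of $I$. The second concerns possible stationary solutions of the ODE, which would be constant $\theta$ with $\cos\theta=0$; these are excluded by the same positivity. I expect no real obstacle. The only delicate step is recognizing the antiderivative of $\sec\theta$ in the form $\tanh^{-1}(\sin\theta)$, so that $\cos\theta$ comes out as a hyperbolic secant.

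As a consistency check, one can verify directly that $\theta=\sin^{-1}(\tanh q)$ satisfies $\theta'=\sech q\cdot q'=\cos\theta\,(1+f)$. The case $f\equiv 0$ recovers the grim reaper curvature $\sech s$.
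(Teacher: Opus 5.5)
Your proposal is correct and follows the paper's proof. Like the paper, you substitute $\cos\theta=\kappa$ into Lemma \ref{lem31} to get $\theta'=\cos\theta\,(1+f)$ and then solve this ODE. You also supply the details the paper omits: the antiderivative $\tanh^{-1}(\sin\theta)$ of $\sec\theta$, and the positivity $\cos\theta=\kappa>0$, which justifies both applying the lemma and choosing $\cos\theta=\sech q$ rather than $-\sech q$.
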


\begin{proof}
By the assumption, we have $\cos \theta=\kappa$. Substituting into \eqref{trs32} yields
$$
\theta'=\cos \theta (1+f  ).
$$
The result follows by solving this ODE.
\end{proof}

Once Theorem \ref{thm31} has been proved, we provide an example of such a PA curve in $\h^2:=\h^2(-1)$. 

\begin{example} \label{examp45}
Consider the upper-half plane model of $\h^2$ and the following curve $\gamma \subset \h^2$  given by
$$
\gamma(s)=\left (\int^s(\cosh(2s))^{\frac{-1}{2}}du, \sqrt{\cosh(2s)}\right ), \quad s\in \r.
$$
Since $\langle\gamma'(s),\gamma'(s)\rangle_h=1$, it is parametrized by hyperbolic arc length. Letting $e_1:=y\partial_x\in \x(\h^2)$ and $e_2:=-y\partial_y\in \x(\h^2)$, we have
$$
T=\sech(2s) e_1+\tanh(2s) e_2, \quad N=-\tanh(2s) e_1+\sech(2s) e_2.
$$
Let $\v=e_2$ be the unit anti-torqued vector field on $\h^2$ with conformal scalar $f=1$. Denote by $\theta $ the angle function between $e_2\,\circ \,\gamma$ and $N$. Then, it follows $\cos \,\theta(s)=\sech(2s)$. The curvature $\kappa$ of $\gamma$ is computed by the relation $\kappa=y\kappa_e+\langle N_e, \partial_y \rangle$, where $\kappa_e$ and $N_e$ are the curvature and the unit normal with respect to the Euclidean metric $\langle , \rangle$ (see \cite[Equation (4)]{kl}). Therefore, a computation gives $\kappa(s)=\sech(2s)$.  Consequently, $\gamma$ becomes a PA curve in $\h^2$ with $(e_2,\theta)$, where $ \kappa=\cos \theta$.
\end{example}

\begin{remark}
Although the curve in Example \ref{examp45} satisfies the relation $\kappa=\langle e_2,N\rangle,$
it is not a soliton solution to the curve shortening flow in $\h^2$. This is because, in the hyperbolic plane, soliton solutions of the curve shortening flow are characterized by $\kappa=\langle N,X\rangle,$
where $X\in \x(\h^2)$ is a Killing vector field generating a one-parameter group of isometries of $\h^2$ (see \cite{kl}). However, the vector field $e_2$ is not a Killing vector field on $\h^2$. Therefore, the relation $\kappa=\langle e_2,N\rangle$
does not describe a soliton of the curve shortening flow in $\h^2$, unlike the Euclidean grim reaper equation.
\end{remark}

%%%%%%%%%%%%%%%%%%%%%%%%%%%%%%%
\section*{Acknowledgments}
%%%%%%%%%%%%%%%%%%%%%%%%%%%%%%%
This study was supported by Scientific and Technological Research Council of Turkey (TUBITAK) under the Grant Number (123F451). The authors thank to TUBITAK for their supports.

%%%%%%%%%%%%%%%%%%%%%%%%%%%%%%%
\section*{Declarations}
%%%%%%%%%%%%%%%%%%%%%%%%%%%%%%%

\begin{itemize}
\item Conflict of interest: The authors declare no conflict of interest.

\item Ethics approval: Not applicable.

\item Availability of data and materials: Not applicable.

\item Code availability: Not applicable.

\item Authors’ contributions: Muhittin Evren Aydın formulated the problem, performed the main analysis, and designed the structure of the manuscript. Esra Dilmen and Büşra Karakaya contributed to solving the problem and to writing the manuscript. All authors read and approved the final version of the paper.
\end{itemize}

%%%%%%%%%%%%%%%%%%%%%%%%%%%%%%%

 \end{document}